\def\NZQ{\mathbb}               % the font for N,Z,Q,R,C
\def\NN{{\NZQ N}}
\def\ZZ{{\NZQ Z}}
\def\frk{\mathfrak}               % font for "Fraktur"
\def\mm{{\frk m}}
\def\Phi{{\frk N}}
\def\opn#1#2{\def#1{\operatorname{#2}}} % to make operators
\opn\chara{char} \opn\length{\ell} \opn\pd{pd} \opn\rk{rk}
\opn\projdim{proj\,dim} \opn\injdim{inj\,dim} \opn\rank{rank}
\opn\depth{depth} \opn\grade{grade} \opn\height{height}
\opn\embdim{emb\,dim} \opn\codim{codim}
\opn\Tr{Tr} \opn\bigrank{big\,rank}
\opn\superheight{superheight}\opn\lcm{lcm}
\opn\trdeg{tr\,deg}%\emph{
\opn\reg{reg} \opn\lreg{lreg} \opn\ini{in} \opn\lpd{lpd}
\opn\size{size}\opn{\mult}{mult}
\opn\div{div} \opn\Div{Div} \opn\cl{cl} \opn\Cl{Cl}
\opn\Spec{Spec} \opn\Supp{Supp} \opn\supp{supp} \opn\Sing{Sing}
\opn\Ass{Ass} \opn\Min{Min}
\opn\Ann{Ann} \opn\Rad{Rad} \opn\Soc{Soc}
\opn\Syz{Syz} \opn\Im{Im} \opn\Ker{Ker} \opn\Coker{Coker}
\opn\Am{Am} \opn\Hom{Hom} \opn\Tor{Tor} \opn\Ext{Ext}
\opn\End{End} \opn\Aut{Aut} \opn\id{id}
\opn\nat{nat}
\opn\pff{pf}%   \pf exists already
\opn\Pf{Pf} \opn\GL{GL} \opn\SL{SL} \opn\mod{mod} \opn\ord{ord}
\opn\Gin{Gin}
\opn\Hilb{Hilb}\opn\adeg{adeg}\opn\std{std}\opn\ip{infpt}
\opn\Pol{Pol}
\opn\sat{sat}
\opn\Var{Var}
\opn \ann{ann}
\opn\sdepth{sdepth}
\opn\aff{aff} \opn\con{conv} \opn\relint{relint} \opn\st{st}
\opn\lk{lk} \opn\cn{cn} \opn\core{core} \opn\vol{vol}
\opn\link{link} \opn\star{star}
\opn\gr{gr}
\def\pot#1#2{#1[\kern-0.28ex[#2]\kern-0.28ex]}
\opn\dirlim{\underrightarrow{\lim}}
\opn\inivlim{\underleftarrow{\lim}}
\let\iso=\cong
\let\Union=\bigcup
\let\Dirsum=\bigoplus
\let\To=\longrightarrow
\def\Implies{\ifmmode\Longrightarrow \else
        \unskip${}\Longrightarrow{}$\ignorespaces\fi}
\def\implies{\ifmmode\Rightarrow \else
        \unskip${}\Rightarrow{}$\ignorespaces\fi}
\def\iff{\ifmmode\Longleftrightarrow \else
        \unskip${}\Longleftrightarrow{}$\ignorespaces\fi}
\theoremstyle{plain}
\newtheorem{Theorem}{Theorem}[section]
\newtheorem{Lemma}[Theorem]{Lemma}
\newtheorem{Corollary}[Theorem]{Corollary}
\newtheorem{Proposition}[Theorem]{Proposition}
\newtheorem{Remark}[Theorem]{Remark}
\let\epsilon\varepsilon
\let\phi=\varphi
\let\kappa=\varkappa
\def\qed{\ifhmode\textqed\fi
      \ifmmode\ifinner\quad\qedsymbol\else\dispqed\fi\fi}
\def\textqed{\unskip\nobreak\penalty50
       \hskip2em\hbox{}\nobreak\hfil\qedsymbol
       \parfillskip=0pt \finalhyphendemerits=0}
\def\dispqed{\rlap{\qquad\qedsymbol}}
\opn\dis{dis}
\def\pnt{{\raise0.5mm\hbox{\large\bf.}}}
\opn\Lex{Lex} \opn\hreg{hreg}
\begin{document}

\title{Skeletons of monomial ideals}

\author{J\"urgen Herzog, Ali Soleyman Jahan and  Xinxian Zheng}

\address{J\"urgen Herzog, Fachbereich Mathematik und
Informatik, Universit\"at Duisburg-Essen, Campus Essen, 45117
Essen, Germany} \email{juergen.herzog@uni-due.de}

\address{Ali Soleyman Jahan , Fachbereich Mathematik und
Informatik, Universit\"at Duisburg-Essen, Campus Essen, 45117
Essen, Germany} \email{ali.soleyman-jahan@stud.uni-due.de}

\address{Xinxian Zheng, Fachbereich Mathematik und
Informatik, Universit\"at Duisburg-Essen, Campus Essen, 45117
Essen, Germany} \email{xinxian.zheng@uni-due.de}

\thanks{The third author is grateful for the financial support by DFG (Deutsche Forschungsgemeinschaft)
during the preparation of this work}
\subjclass{13C13, 13C14, 05E99, 16W70}
\keywords{Monomial ideals, depth, skeleton, Cohen--Macaulay, Stanley decompositions}

\begin{abstract}
In analogy to the skeletons of a simplicial complex and their
Stanley--Reisner ideals  we introduce the skeletons of an
arbitrary monomial ideal $I\subset S=K[x_1,\ldots,x_n]$.  This
allows us to compute the depth of $S/I$ in terms of its skeleton
ideals.  We apply these techniques to show that Stanley's
conjecture on Stanley decompositions of $S/I$ holds provided it
holds whenever $S/I$ is Cohen--Macaulay. We also discuss a
conjecture of Soleyman-Jahan and show that it suffices to prove
his conjecture for monomial ideals with linear resolution.
\end{abstract}
\maketitle
\section*{Introduction}

Let $\Delta$ be a simplicial complex of dimension $d-1$  on the vertex set $\{1,\ldots,n\}$, $K$ a field and
$K[\Delta]$ the Stanley--Reisner ring of $\Delta$. The depth of $K[\Delta]$ can be expressed in terms of the skeletons
of $\Delta$, as has been shown by D.\ Smith \cite[Theorem 3.7]{Sm} for pure simplicial complexes, and by Hibi
\cite[Corollary 2.6]{Hi} in general.  The $j$th skeleton of $\Delta$ is the simplicial subcomplex
$\Delta^{(j)}=\{F\in\Delta\:\; |F|\leq j\}$ of $\Delta$. The result is that
$\depth K[\Delta]=\max\{j\:\; \text{$\Delta^{(j)}$ is Cohen--Macaulay}\}$.

The purpose of this paper is to generalize this result as follows:
first  note that we have the following chain of Stanley--Reisner
ideals $I_\Delta=I_{\Delta^{d}}\subset I_{\Delta^{d-1}}\subset
\cdots \subset I_{\Delta^{0}}\subset S$ with $\dim
S/I_{\Delta^{(j)}}=j$  for all $j$. Now for an arbitrary monomial
ideal $I\subset S$ we want to define in a natural way  a similar
chain of monomial ideals $I=I_d\subset I_{d-1}\subset \cdots
\subset I_0\subset S$ with $\dim S/I_j=j$  for all $j$,
and of course this chain should satisfy the condition that $\depth
S/I=\max\{j\:\; \text{$S/I_j$ is Cohen--Macaulay}\}$. We show
in Section~1 that such a natural chain of monomial ideals with
these properties indeed exists. The ideal $I_j$ is called the
$j$th {\em skeleton ideal} of $I$.

For the construction of the skeleton ideals of $I$ we consider the
so-called {\em characteristic poset} $P^g_{S/I}$ introduced in
\cite{HVZ}. Here $g\in\NN^n$ is an integer vector such that $g\geq
a$ for all $a$ for which  $x^a$ belongs to the minimal set of
monomial generators of $I$, and  $P^g_{S/I}$ is the (finite) poset
of all $b\in \NN^n$ such that $b\leq g$ and $x^b\not\in I$. Here the partial order on $\NN^n$ is defined as follows:   $a\leq b$ if and only if  $a(i)\leq b(i)$ for $i=1,\ldots,n$. In case of a
Stanley--Reisner ideal $I_\Delta$ and $g=(1,1,\,\ldots,1)$ this
poset is just the face poset of $\Delta$. For each $b\in \NN^n$,
let $\rho(b)=|\{j\:\; b(j)=g(j)\}|$. It  has been shown in
\cite[Corollary 2.6]{HVZ} that $\dim S/I=\max\{\rho(b)\:\; b\in
P^g_{S/I}\}$. We use this integer function $\rho$ to define the
skeleton ideals of $I$, and let  $I_j$ be the monomial  ideal
generated by $I$ and all $x^b$ with $\rho(b)>j$. It is the easy to
see that $\dim S/I_j=j$ for all $j$. The crucial result
however is that for all $j$, $I_{j-1}/I_j$ is Cohen-Macaulay
module of dimension $j$, see Theorem~\ref{crucial}. From this
result we easily deduce in Corollary ~\ref{hibi} a generalization
of the result of Hibi, namely that $\depth S/I=\max\{j\:\;
\text{$S/I_j$ is Cohen--Macaulay}\}$.

In Section~2 we apply the results and techniques introduced in Section~1 to deduce some results on Stanley decomposition.
Let $M$ be a finitely generated $\ZZ^n$-graded $S$-module,  $m\in M$ be a homogeneous element and
$Z\subset X=\{x_1,\ldots,x_n\}$. We denote by $mK[Z]$ the
$K$-subspace of $M$ generated by all homogeneous elements of the
form $mu$, where $u$ is a monomial in $K[Z]$. The $K$-subspace
$mK[Z]$ is called a {\em Stanley space of dimension $|Z|$} if
$mK[Z]$ is a free $ K[Z]$-module.

A decomposition $\mathcal D$ of $M$ as a finite direct sum of
Stanley spaces is called a {\em Stanley decomposition} of $M$. The
minimal dimension of a Stanley space in the decomposition
$\mathcal D$ is called the {\em Stanley depth} of $\mathcal D$,
denoted by $\sdepth {\mathcal D}$. We set
$$\sdepth M=\max\{\sdepth{\mathcal D}\: {\mathcal D}\; \text{is a
Stanley decomposition of $M$}\}, $$ and call this number the {\em
Stanley depth} of $M$. A famous conjecture of Stanley asserts that
$\sdepth M\geq \depth M$.

As one of the main results of Section~2 we show in
Corollary~\ref{reduction} that for each monomial ideal $I$
Stanley's conjecture holds for $S/I$ if it holds whenever $S/I$ is
Cohen--Macaulay. We also discuss a conjecture of Soleyman-Jahan.
His conjecture asserts that we can always find a Stanley
decomposition $M=\Dirsum_{j=1}^rm_jK[Z_j]$ of $M$ with $|\deg
m_j|\leq \reg(M)$ for all $j$. Here $|a|=\sum_{i=1}^na(i)$ for
$a\in\ZZ^n$. We show in the case that $M=I$ is a monomial ideal,
it suffices to prove this conjecture when $I$ has a linear
resolution.

\section{Characteristic posets and skeletons}
Let $K$ be a field, $S=K[x_1,\ldots,x_n]$ the polynomial ring in $n$ variables and
 $I\subset S$  a monomial ideal. We denote by $G(I)$ the unique minimal set of monomial
 generators of $I$. Let $G(I)=\{u_1,\ldots,u_m\}$ with $u_i=x^{a_i}$ and $a_i\in \NN^n$.
 Here, for any $c\in \NN^n$ we denote as usual by  $x^c$ the monomial $x_1^{c(1)}x_2^{c(2)}\cdots x_n^{c(n)}$.

Observe that $\NN^n$ with  the natural
partial order introduced in the introduction  is a distributive lattice with meet $a\wedge  b$  and join $a\vee b$
defined as follows: $(a\wedge  b)(i)=\min\{a(i),b(i)\}$ and $(a\vee  b)(i)=\max\{a(i),b(i)\}$.
We also denote by $\epsilon_j$ the $j$th canonical unit vector in $\ZZ^n$.

Let $J\subset S$ be another monomial ideal with $I\subset J$, minimally generated by $x^{b_1},\ldots, x^{b_s}$.
We choose  $g\in\NN^n$ such that $a_i\leq g$ and $b_j\leq g$ for all $i$ and $j$ , and let
$P^g_{J/I}$ be the set of all $b\in \NN^n$ with $b\leq g$, $b\geq b_j$ for some $j$, and
$b\not\geq a_i$ for all $i$. The  set $P^g_{J/I}$ viewed as a
subposet of $\NN^n$ is a finite poset, and is called the {\em
characteristic poset} of $J/I$ with respect to $g$, see
\cite{HVZ}.

For any $b\in \NN^n$ we define subsets $Y_b=\{x_j\:\; b(j)\neq
g(j)\}$ and $Z_b=\{x_j\:\; b(j)=g(j)\}$ of $X=\{x_1,\ldots,x_n\}$
and set $\rho(b)=|\{j\:\; b(j)=g(j)\}|=|Z_b|$. Let $d=\dim J/I$ be
the Krull dimension of $J/I$. It is shown in \cite[Corollary
2.6]{HVZ} that
\begin{eqnarray}
\label{1}
d=\max\{\rho(b)\: b\in P^{g}_{J/I}\}.
\end{eqnarray}
As a consequence of (\ref{1}) we obtain
\begin{Lemma}
\label{easy}
Let $d=\dim J/I$. Then $\rho(b)\leq d$ for all $b\in \NN^n$ with   $x^b\in J\setminus I$.
\end{Lemma}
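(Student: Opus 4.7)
The plan is to reduce the statement to equation (\ref{1}) by constructing, for any $b\in\NN^n$ with $x^b\in J\setminus I$, an element $b'\in P^g_{J/I}$ with $\rho(b')\geq \rho(b)$. The issue is that $b$ itself need not belong to $P^g_{J/I}$: some coordinates $b(i)$ may exceed $g(i)$, so $b$ fails the condition $b\leq g$. Coordinates where $b(j)>g(j)$ do not contribute to $\rho(b)$ anyway, so truncating $b$ down to $g$ should be harmless for the purposes of counting $\rho$, while at the same time it lands us inside $P^g_{J/I}$.

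Concretely, I would set $b':=b\wedge g$, i.e.\ $b'(j)=\min\{b(j),g(j)\}$. By construction $b'\leq g$, and if $b(j)=g(j)$ then $b'(j)=g(j)$, which immediately gives $\rho(b')\geq\rho(b)$. Next I would verify the two remaining conditions defining $P^g_{J/I}$. Since $x^b\in J$ there is some generator $x^{b_k}$ with $b\geq b_k$; because $b_k\leq g$ we may take the meet with $g$ on both sides to obtain $b'=b\wedge g\geq b_k\wedge g=b_k$, so $x^{b'}\in J$. Finally, if $b'\geq a_i$ for some generator $x^{a_i}$ of $I$, then from $b\geq b'$ we would conclude $b\geq a_i$, contradicting $x^b\notin I$. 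Hence $b'\not\geq a_i$ for all $i$, and so $b'\in P^g_{J/I}$.

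Applying (\ref{1}) then yields $\rho(b)\leq\rho(b')\leq d$, which is exactly the claim. I do not foresee a real obstacle: everything rests on the elementary fact that the meet operation in the distributive lattice $\NN^n$ is compatible with the inequalities defining $P^g_{J/I}$, so the only thing to be careful about is the correct direction of the implications $b'\geq a_i\Rightarrow b\geq a_i$ and $b\geq b_k\Rightarrow b'\geq b_k$ (the latter requiring $b_k\leq g$, which holds by the choice of $g$).
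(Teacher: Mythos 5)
Your proof is correct. It differs from the paper's in an interesting, essentially dual way: the paper keeps $b$ fixed and enlarges the bounding vector, setting $g'=g\vee b$, so that $b$ itself lies in $P^{g'}_{J/I}$; it then applies (\ref{1}) for the poset taken with respect to $g'$ and compares the two counting functions via $|\{j: b(j)=g(j)\}|\leq |\{j: b(j)\geq g(j)\}|=|\{j: b(j)=g'(j)\}|$. You instead keep $g$ fixed and shrink $b$ to $b'=b\wedge g$, verifying directly that $b'$ satisfies the three defining conditions of $P^g_{J/I}$ (here your observation that $b_k\leq g$ is needed for $b'\geq b_k$, and that $b'\geq a_i$ would force $b\geq a_i$, are exactly the right checks). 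Your route has the small advantage of never leaving the original poset, so (\ref{1}) is invoked only for the fixed $g$ and the inequality $\rho(b)\leq\rho(b')$ is immediate from the definition of the meet; the paper's route has the small advantage of not having to re-verify membership conditions for a modified vector, at the cost of implicitly using that (\ref{1}) is valid for any admissible choice of the bounding vector. Both arguments are equally short and rest on the same formula (\ref{1}).
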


\begin{proof}
Let $g'=g\vee b$. Then $b \in P^{g'}_{J/I}$, and hence  $|\{j\:\; b(j)=g'(j)\}|\leq d$, by (\ref{1}).
Since $\rho(b)=|\{j\:\; b(j)=g(j)\}|\leq |\{j\:\; b(j)\geq g(j)\}|=|\{j\:\; b(j)=g'(j)\}|$, the assertion follows.
\end{proof}

Formula (\ref{1})  leads us to consider for each $j\leq d$, the
monomial ideal $I_j$ generated by  $I$ together with all
monomials $x^b$ such that  $\rho(b)>j$. We then obtain a chain of
monomial ideals
\[
I=I_d\subset I_{d-1}\subset \cdots \subset I_{0}\subset S.
\]
Of course this chain of ideals depends not only  on $I$, but also on the choice of $g$.

Consider the special case, where $I=I_\Delta$ is the
Stanley--Reisner ideal of a simplicial complex $\Delta$ on the
vertex set $\{1,\ldots,n\}$. Then for $g=(1,\ldots,1)$ we have
$I_j=I_{\Delta^{(j)}}$.  This observation justifies to call
$I_j$ the {\em $j$th skeleton ideal} of $I$ (with respect to
$g$).

\medskip
The following result is crucial for this note.

\begin{Theorem}
\label{crucial} For each $0\leq j\leq d$, the factor module
$I_{j-1}/I_j$ is a direct sum of cyclic Cohen--Macaulay
modules of dimension  $j$. In particular, $I_{j-1}/I_j$ is a
$j$-dimensional Cohen--Macaulay module.
\end{Theorem}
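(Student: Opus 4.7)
The plan is to exhibit an explicit direct sum decomposition
\[
I_{j-1}/I_j \;=\; \bigoplus_{F} S\,\overline{x^{g_F}},
\]
indexed by the subsets $F\subseteq\{1,\ldots,n\}$ with $|F|=j$ and $x^{g_F}\notin I$, where $g_F\in\NN^n$ is defined by $g_F(i)=g(i)$ for $i\in F$ and $g_F(i)=0$ otherwise. Each summand will turn out to be a cyclic Cohen--Macaulay module of dimension $j$.

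The key observation is that for any $c\in\NN^n$ one has $x^c\in I_j$ iff $x^c\in I$ or $\rho(c\wedge g)>j$; the non-trivial direction uses that if $b'\leq c$ with $\rho(b')>j$, then $(c\wedge g)(i)=g(i)$ wherever $b'(i)=g(i)$, so $\rho(c\wedge g)\geq\rho(b')$. Combined with the easy equivalence $x^c\in I\iff x^{c\wedge g}\in I$ (valid because $a_k\leq g$ for every $x^{a_k}\in G(I)$), this yields that $\overline{x^c}\neq 0$ in $I_{j-1}/I_j$ exactly when $x^c\notin I$ and $\rho(c\wedge g)=j$. In that case $F:=\{i:(c\wedge g)(i)=g(i)\}$ satisfies $|F|=j$, $g_F\leq c$ and $x^{g_F}\notin I$, so $\overline{x^c}\in S\,\overline{x^{g_F}}$.

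To see that $M_F:=S\,\overline{x^{g_F}}$ is cyclic Cohen--Macaulay of dimension $j$, I would compute $\Ann_S(\overline{x^{g_F}})=(I_j:x^{g_F})$. A direct calculation gives $\rho((g_F+e)\wedge g)=j+|\{i\notin F:e(i)\geq g(i)\}|$ for a monomial $x^e$, and applying the characterisation of $I_j$ above yields
\[
(I_j:x^{g_F}) \;=\; (I:x^{g_F}) + (x_i^{g(i)}:i\notin F).
\]
The monomial generators of $(I:x^{g_F})$ are $\prod_{i\notin F}x_i^{a_k(i)}$; since $x^{g_F}\notin I$, no $a_k$ is supported on $F$, so these are nontrivial monomials in $\{x_i:i\notin F\}$. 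Hence the whole annihilator lies in $K[x_i:i\notin F]$, and writing $S=K[x_i:i\in F]\tensor_K K[x_i:i\notin F]$ gives $M_F\iso K[x_i:i\in F]\tensor_K A_F$ with $A_F$ an Artinian quotient of $K[x_i:i\notin F]$. Thus $M_F$ is a free module of finite rank over the polynomial subring $K[x_i:i\in F]$, and is therefore Cohen--Macaulay of dimension $|F|=j$ as an $S$-module.

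Directness of the sum then follows, since the same annihilator computation identifies the nonzero monomial classes of $M_F$ as exactly the $\overline{x^{g_F+e}}$ with $e(i)<g(i)$ for $i\notin F$ and $x^{g_F+e}\notin I$, which are precisely those whose $(g_F+e)\wedge g$ has index set $F$; hence the $M_F$ for distinct $F$ have disjoint monomial $K$-bases. The main obstacle is the bookkeeping in the annihilator calculation: one must check simultaneously that both the $I$-relations (through $(I:x^{g_F})$) and the $\rho>j$-relations specialise to relations involving only the variables $\{x_i:i\notin F\}$, and that together they yield an Artinian quotient there---this is precisely what forces the Cohen--Macaulay dimension of $M_F$ to equal $j$.
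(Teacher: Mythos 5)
Your proof is correct and is essentially the paper's own argument: after reducing to $j=d$ (by replacing $I$ with $I_j$), the paper decomposes $I_{j-1}/I_j$ into the cyclic modules generated by $x^{b_i}+I$, where $b_i$ is the smallest element of the fiber $\{b\in P^g_{S/I}:\rho(b)=d,\ Z_b=Z_i\}$, and these $b_i$ are exactly your vectors $g_F$; Cohen--Macaulayness is likewise proved by showing the annihilator is generated by monomials in the variables outside $F$ and contains a power $x_k^{g(k)-b_i(k)}$ of each of them, so each summand is free of finite rank over $K[x_i: i\in F]$. The only detail to adjust is your index set: it should consist of the $F$ with $|F|=j$ and $x^{g_F}\notin I_j$ (equivalently, $x^{g_F}\notin I$ and $g(i)>0$ for all $i\notin F$), since otherwise a listed summand can be zero when $g$ has zero entries; this degenerate choice of $g$ is implicitly excluded in the paper as well, and your argument is untouched whenever $g\geq (1,\ldots,1)$.
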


\begin{proof}
Replacing $I$ by $I_j$ it suffices to consider the case $j=d$.
Let
$$J=(I,\{x^b\:\; b\in A\}),\quad \text{where}\quad A=\{b\in P_{S/I}^g\:\; \rho(b)=d\},$$
then $I_{d-1}/I_d=J/I$.

Let $\{Z_1,\ldots,Z_r\}$ be the collection of those subsets of $X$
with the property that for each $i=1,\ldots,r$ there exists $b\in
A$ such that $Z_i=Z_b$. Let $A_i=\{b\in A\:\; Z_b=Z_i\}$, and let
$b,b'\in A_i$. Then $b\wedge b'\in A_i$. Thus the meet of all the
elements in $A_i$ is the unique smallest element in $A_i$. We
denote this element by $b_i$.  Then $Z_i=Z_{b_i}$. Obviously the
elements $f_i=x^{b_i}+I$, $i=1,\ldots,r$  generate $J/I$. We claim
that
\[
J/I=\Dirsum_{i=1}^rSf_i.
\]
The cyclic module  $Sf_i$ is $\ZZ^n$-graded with a $K$-basis   $x^a+I$ with $a\geq b_i$ and $x^a\not\in I$.
Given  $c\in\NN^n$ with   $c\geq b_i$
and $c\geq b_j$ for some $1\leq i<j\leq r$,  then  $\rho(c)>d$, and so $x^c\in I$, by Lemma~\ref{easy}.
This shows that the sum of the cyclic modules $Sf_i$ is indeed direct.

Next we  notice that if $x^c=x^{c_1}x^{c_2}$ with  $x^{c_1}\in
K[Z_{b_i}]$ and $x^{c_2}\in K[Y_{b_i}]$ belongs to $\Ann(Sf_i)$,
then $x^{c_2}\in \Ann(Sf_i)$. Indeed, $x^c=x^{c_1}x^{c_2}\in
\Ann(Sf_i)$ if and only if ${a_j}\leq {b_i+c_1+c_2}$ for some $j$.
Since $c_1(k)=0$ for all $k$ with $x_k\in Y_{b_i}$, it follows
that $a_j(k)\leq (b_i+c_2)(k)$ for all $k\in Y_{b_i}$, while for
$k$ with $x_k\in Z_{b_i}$ we have $a_j(k)\leq
g(k)=b_i(k)=(b_i+c_2)(k)$. Hence $a_j\leq b_i+c_2$, which implies
that $x^{c_2}\in \Ann(Sf_i)$.

It follows that  $\Ann(Sf_i)$  is
generated by monomials in $K[Y_{b_i}]$. In other words, there
exists a monomial ideal $M_i\subset K[Y_{b_i}]$ such that
$\Ann(Sf_i)=M_iS$.

For each $k$ with $x_k\in Y_{b_i}$ we have $b_i(k)<g(k)$ and
$\rho(b_i+(g(k)-b_i(k))\epsilon_k)=d+1$. Therefore Lemma~\ref{easy} implies that
$x^{b_i}x_k^{g(k)-b_i(k)}\in I$. It follows that
$x_k^{g(k)-b_i(k)}\in M_i$ for all $k$ with $x_k\in Y_{b_i}$.
Hence we see  that $\dim K[Y_{b_i}]/M_i=0$. This implies that  $Sf_i=S/M_iS$ is Cohen--Macaulay of dimension $d$.
\end{proof}

\begin{Remark}{\em  It is also possible to define skeletons of $J/I$ in the same way as for $S/I$, and one obtains
a chain of ideals $I=I_d\subset I_{d-1}\subset \cdots \subset I_0\subset J$. Some of the factor modules $I_{j-1}/I_j$ however may be $0$ in this generality. But whenever $I_{j-1}/I_j\neq 0$ it follows again that $I_{j-1}/I_j$ is Cohen--Macaulay of dimension $j$, though not always a direct sum of cyclic modules.}
\end{Remark}

\begin{Corollary}\label{notimportant} For $j=0\ldots,d-1$ and  $i=0\ldots,j-1$, we have $\depth(I_j/I)\geq j+1$, and
$H^i_{\mm}(S/I)\iso  H^i_{\mm}(S/I_j)$.
\end{Corollary}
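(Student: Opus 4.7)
The plan is to derive both assertions from Theorem~\ref{crucial} by chasing long exact sequences in local cohomology.

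First I would prove $\depth(I_j/I)\geq j+1$ by downward induction on $j$, starting from $j=d-1$. The base case is immediate: Lemma~\ref{easy} forces $I_d=I$, so $I_{d-1}/I=I_{d-1}/I_d$ is Cohen--Macaulay of dimension $d$ by Theorem~\ref{crucial}, and hence of depth $d=(d-1)+1$. For the inductive step, I would apply the local cohomology functor to
$$0 \to I_{j+1}/I \to I_j/I \to I_j/I_{j+1} \to 0.$$
By Theorem~\ref{crucial}, $I_j/I_{j+1}$ is Cohen--Macaulay of dimension $j+1$, so $H^i_\mm(I_j/I_{j+1})=0$ for $i\leq j$; the induction hypothesis gives $H^i_\mm(I_{j+1}/I)=0$ for $i\leq j+1$, in particular for $i\leq j$. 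Sandwiched between two zeros, $H^i_\mm(I_j/I)$ must vanish for $i\leq j$, proving $\depth(I_j/I)\geq j+1$.

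For the isomorphism I would feed the short exact sequence
$$0 \to I_j/I \to S/I \to S/I_j \to 0$$
into local cohomology and focus on the segment
$$H^i_\mm(I_j/I)\to H^i_\mm(S/I)\to H^i_\mm(S/I_j)\to H^{i+1}_\mm(I_j/I).$$
The depth bound just established gives $H^k_\mm(I_j/I)=0$ for all $k\leq j$, so both outer terms vanish as soon as $i\leq j-1$, which forces the middle arrow to be an isomorphism in that range.

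I do not foresee any substantive obstacle: the whole argument is formal once Theorem~\ref{crucial} is in hand, the only subtlety being to keep the index shifts straight in the downward induction.
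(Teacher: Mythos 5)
Your argument is correct and essentially the paper's own proof: the same downward induction on $j$ via the short exact sequence $0\to I_{j+1}/I\to I_j/I\to I_j/I_{j+1}\to 0$ with Theorem~\ref{crucial} as input, followed by the same long exact sequence of local cohomology for $0\to I_j/I\to S/I\to S/I_j\to 0$. The only cosmetic difference is that you phrase the inductive step through vanishing of $H^i_\mm$ directly, while the paper invokes the depth inequality from \cite[Proposition 1.2.9]{BH}; these are equivalent.
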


\begin{proof}  We prove the assertion  by induction on $d-j$. For $j=d-1$ the assertion follows from
Theorem~\ref{crucial}. Let $j<d-1$. Then the  exact sequence
$$0\To I_{j+1}/I\To I_j/I\To I_j/I_{j+1}\To 0$$ implies that
\[
\depth(I_j/I)\geq\min\{\depth(I_{j+1}/I),\depth(I_j/I_{j+1})\},
\]
see \cite[Proposition 1.2.9]{BH}. By Theorem~\ref{crucial},
$\depth(I_j/I_{j+1})=j+1$ and by induction hypothesis
$\depth(I_{j+1}/I)\geq j+2$. Hence $\depth (I_j/I)\geq j+1$.

The short exact sequence
\[
0\To I_j/I\To S/I\To S/I_j\To 0
\]
yields the long exact sequence
\[
\cdots \To H^{i}_\mm(I_j/I)\To H^i_\mm(S/I)\To
H^i_\mm(S/I_j)\To H^{i+1}_\mm(I_j/I)\To \cdots
\]
of local cohomology. By the first part of the statement we have
$H^k_\mm(I_j/I)=0$ for $k\leq j$. This yields the desired
isomorphisms.
\end{proof}

As an application of Theorem~\ref{crucial} we obtain the following
characterization of the depth of $S/I$ which generalizes  a
classical result of Hibi   \cite[Corollary 2.6]{Hi}.

\begin{Corollary}
\label{hibi} Let $I\subset S$ be a monomial ideal. Then
\[
\depth S/I=\max\{j\:\; \text{$S/I_j$ is Cohen--Macaulay}\},
\]
and $S/I_j$ is Cohen--Macaulay for all $j\leq \depth S/I$.
\end{Corollary}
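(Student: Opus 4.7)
The plan is to derive both inequalities in the claimed max-formula from Corollary~\ref{notimportant}, which already packages the local-cohomology and depth information we need. The corollary is in fact the natural wrap-up of that corollary: on the one hand it gives us an isomorphism of low local cohomology modules $H^i_\mm(S/I)\iso H^i_\mm(S/I_j)$ for $i\le j-1$, and on the other it provides the key depth estimate $\depth(I_j/I)\ge j+1$. Almost everything will come down to reading off these two inputs.

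First I would show that if $S/I_j$ is Cohen--Macaulay then $\depth S/I\ge j$. Since $\dim S/I_j=j$ by construction, Cohen--Macaulayness of $S/I_j$ means $H^i_\mm(S/I_j)=0$ for $i<j$, and then the isomorphisms $H^i_\mm(S/I)\iso H^i_\mm(S/I_j)$ given by Corollary~\ref{notimportant} force $H^i_\mm(S/I)=0$ for $i<j$, whence $\depth S/I\ge j$. Taking the maximum over such $j$ gives one inequality.

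For the converse inequality I would fix $j\le \depth S/I$ and prove that $S/I_j$ is Cohen--Macaulay. The case $j=d$ is trivial since $I_d=I$ and $\depth S/I=d=\dim S/I$ there, so assume $j<d$. Consider the short exact sequence
\[
0\To I_j/I \To S/I \To S/I_j \To 0
\]
and its long exact sequence of local cohomology. Corollary~\ref{notimportant} tells us $\depth(I_j/I)\ge j+1$, so $H^i_\mm(I_j/I)=0$ for $i\le j$, while the hypothesis $\depth S/I\ge j$ yields $H^i_\mm(S/I)=0$ for $i<j$. Sandwiching $H^i_\mm(S/I_j)$ between these two vanishing groups in the long exact sequence gives $H^i_\mm(S/I_j)=0$ for $i<j$. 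Combined with $\dim S/I_j=j$, this shows $S/I_j$ is Cohen--Macaulay.

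The two halves together prove that the maximum is achieved and equals $\depth S/I$, and simultaneously that $S/I_j$ is Cohen--Macaulay for every $j\le\depth S/I$. There is no real obstacle here, since all the substantive work has been done in Theorem~\ref{crucial} and Corollary~\ref{notimportant}; the only care needed is the minor bookkeeping to handle the boundary case $j=d$ and to verify that the indices line up correctly in the long exact sequence.
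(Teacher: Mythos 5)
Your argument is correct, and it is organized differently from the paper's proof. The paper proves Corollary~\ref{hibi} directly from Theorem~\ref{crucial}: it argues by induction on $d=\dim S/I$, using the identity $I_j=(I_{d-1})_j$ for $j\le d-1$ to reduce to the single step from $I$ to $I_{d-1}$, and there it applies the depth inequality for short exact sequences (\cite[Proposition 1.2.9]{BH}) to the sequence $0\to I_{d-1}/I\to S/I\to S/I_{d-1}\to 0$, distinguishing the cases $t<d-1$, $t=d-1$, and $t=d$. You instead take Corollary~\ref{notimportant} as a black box (legitimate, since it precedes the statement) and run both directions through local cohomology: the isomorphisms $H^i_\mm(S/I)\iso H^i_\mm(S/I_j)$ for $i\le j-1$ give $\depth S/I\ge j$ whenever $S/I_j$ is Cohen--Macaulay, and the vanishing $H^i_\mm(I_j/I)=0$ for $i\le j$ (from $\depth(I_j/I)\ge j+1$) sandwiches $H^i_\mm(S/I_j)$ in the long exact sequence to give the converse; together with $\dim S/I_j=j$ and the trivial boundary case $j=d$ this yields both assertions. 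The indices line up exactly as you claim, so there is no gap. What the two approaches buy: yours avoids redoing any induction at this stage (the induction is already packaged inside Corollary~\ref{notimportant}) and treats the two inequalities symmetrically, whereas the paper's case analysis gives as a byproduct the sharper statement $\depth S/I_{d-1}=t$ when $t<d$, which is what feeds Corollary~\ref{extra}; your proof does not directly recover that monotonicity, though it is not part of the statement being proved.
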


\begin{proof}
Let $d=\dim S/I$ and $t=\depth S/I$. Since
$I_j=(I_{d-1})_j$ for $j\leq d-1$, both assertions
follow by induction on $d$ once we can show the following:
\begin{enumerate}
\item[(i)]  If $t<d$, then $\depth S/I_{d-1}=t$.
\item[(ii)] If $S/I$ is Cohen--Macaulay, then $S/I_{d-1}$ is Cohen--Macaulay.
\end{enumerate}
Proof of (i): The exact sequence
\[
0\To I_{d-1}/I\To S/I\To S/I_{d-1}\To 0
\]
implies that
\begin{eqnarray}
\label{depth} \depth S/I_{d-1}\geq \min\{\depth (I_{d-1}/I)-1
, \depth S/I\},
\end{eqnarray}
with equality if $t<d-1$, see \cite[Proposition 1.2.9]{BH}.  By
Theorem~\ref{crucial}, $\depth (I_{d-1}/I)-1=d-1$.  It follows
that $\depth S/I_{d-1}=t$, if $t<d-1$. On the other hand, if
$t=d-1$, then $\depth S/I_{d-1}\geq d-1$. However, since $\dim
S/I_{d-1}= d-1$, we again get $\depth S/I_{d-1}= d-1=t$.

Proof of (ii):  If $S/I$ is Cohen--Macaulay, then $\depth S/I=d$.
Hence Theorem~\ref{crucial} and inequality (\ref{depth}) imply
that $\depth S/I_{d-1}\geq d-1$. Since $\dim S/I_{d-1}=d-1$,
the assertion follows.
\end{proof}

The proof of Corollary~\ref{hibi} provides the following additional information.

\begin{Corollary}\label{extra} We have
$ \depth S/I_{j-1}\leq\depth S/I_{j}\leq\depth S/I$ for all
$0\leq j\leq \dim S/I$.
\end{Corollary}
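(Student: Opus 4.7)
The strategy is to bootstrap from the proof of Corollary~\ref{hibi} rather than give a new argument. The key compatibility is that forming skeletons commutes with itself: with the same choice of $g$ (which is legal because $g$ dominates the exponents of all generators of every $I_j$), one checks from the definitions that $(I_j)_{j-1}=I_{j-1}$ for every $1\le j\le d$. This is, in fact, the same remark that is already invoked in the inductive step of Corollary~\ref{hibi}, where the identity $I_j=(I_{d-1})_j$ is used to set up the induction on $d$.

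Under this identification, the two sub-claims (i) and (ii) proved inside Corollary~\ref{hibi} can be read as a single equality when applied with $I_j$ in place of $I$ and with the corresponding dimension $j=\dim S/I_j$, namely
\[
\depth S/I_{j-1}\;=\;\min\bigl(\depth S/I_j,\; j-1\bigr).
\]
Indeed, the two cases $\depth S/I_j<j$ and $\depth S/I_j=j$ (i.e.\ $S/I_j$ not Cohen--Macaulay versus Cohen--Macaulay) are exactly the dichotomy (i)/(ii), and in each case the argument there concludes with the displayed value. From this the left inequality $\depth S/I_{j-1}\le\depth S/I_j$ is immediate, since $\min(a,b)\le a$.

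For the right inequality I proceed by downward induction on $j$, starting from $\depth S/I_d=\depth S/I$. Unfolding the recursion gives the closed form
\[
\depth S/I_j\;=\;\min\bigl(\depth S/I,\; j\bigr)
\]
for all $0\le j\le d$, from which $\depth S/I_j\le\depth S/I$ is obvious. The only point that demands a moment of care is the compatibility $(I_j)_{j-1}=I_{j-1}$ used at the outset; once this is confirmed, the corollary is essentially a restatement of information already contained in the proof of Corollary~\ref{hibi}, so no genuinely new homological input is needed.
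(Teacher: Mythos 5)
Your proposal is correct and follows essentially the same route as the paper, which derives this corollary directly from the proof of Corollary~\ref{hibi} via the compatibility of skeletons (the identity $I_{j'}=(I_j)_{j'}$ for $j'\leq j$ with the same $g$) and induction. Making the recursion explicit as $\depth S/I_{j-1}=\min(\depth S/I_j,\,j-1)$, hence $\depth S/I_j=\min(\depth S/I,\,j)$, is just a clean restatement of what the paper's argument yields.
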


\section{Applications to Stanley decompositions}

Let $I\subset S$ be a monomial ideal. In the recent paper \cite{HVZ}
it was shown that the Stanley depth of $S/I$ can be computed by
means of properties of $P^g_{S/I}$. The result \cite[Theorem
2.1]{HVZ} can be summarized as follows: given any poset $P$ and
$a,b\in P$, we set $[a,b]=\{c\in P\:\; a\leq c\leq b\}$ and call
$[a,b]$ an {\em interval}. Of course, $[a,b]\neq \emptyset$ if and
only if $a\leq b$. Suppose $P$ is a finite poset. A {\em
partition} of $P$ is a disjoint union
\[
\mathcal{P}\:\; P=\Union_{i=1}^r[c_i,d_i]
\]
of non-empty intervals. Let $\mathcal{P}\:\; P^g_{S/I}=\Union_{i=1}^r[c_i,d_i]$ be a partition of $P^g_{S/I}$.
We set
\[\rho(\mathcal{P})=\min\{\rho(d_i)\:\; i=1,\ldots,r\}.
\]
Then
\[
\sdepth S/I= \max\{\rho(\mathcal{P})\:\; \mathcal{P}\; \text{is a partition of $P^g_{S/I}$}\}.
\]
We use this characterization of the Stanley depth and the results of the previous section to prove

\begin{Proposition}
\label{main}
For all $0\leq j\leq d= \dim S/I$ we have
\[
\sdepth S/I\geq \sdepth S/I_j.
\]
\end{Proposition}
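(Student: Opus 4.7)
My plan is to use the characterization of Stanley depth via partitions of the characteristic poset that was recalled just above the proposition. Starting from an optimal partition of $P^g_{S/I_j}$, I will enlarge it to a partition of $P^g_{S/I}$ without decreasing its $\rho$-value.

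The first step is to establish the combinatorial identification
\[
P^g_{S/I_j} = \{b \in P^g_{S/I} : \rho(b) \leq j\}.
\]
One inclusion is immediate from the definition of $I_j$: if $b\in P^g_{S/I}$ and $\rho(b)>j$, then $x^b\in I_j$ by construction, so $b\notin P^g_{S/I_j}$. For the other direction, suppose $b\leq g$ with $x^b\notin I$ but $x^b\in I_j$; then $x^b$ is a multiple of some added generator $x^c$ with $\rho(c)>j$, and since $c\leq b\leq g$, every index $k$ with $c(k)=g(k)$ forces $b(k)=g(k)$, giving $\rho(b)\geq \rho(c)>j$. I also record that the same $g$ is admissible for both $I$ and $I_j$ because every added generator $x^b$ satisfies $b\leq g$ by construction.

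Once this identification is in place, the rest is essentially free. Set $s := \sdepth S/I_j$, and note that $s \leq \dim S/I_j = j$ since Stanley depth is always bounded by Krull dimension (the summands of any Stanley decomposition have dimension at most $\dim M$). Fix a partition $\mathcal{P}: P^g_{S/I_j} = \Union_{i=1}^r [c_i, d_i]$ realizing $\rho(\mathcal{P}) = s$, and extend it to a partition $\mathcal{P}'$ of $P^g_{S/I}$ by adjoining the singleton intervals $[b, b]$ for every $b \in P^g_{S/I}$ with $\rho(b) > j$. The original intervals contribute top elements $d_i$ with $\rho(d_i) \geq s$ by choice of $\mathcal{P}$, while each new singleton top $b$ satisfies $\rho(b) > j \geq s$. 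Hence $\rho(\mathcal{P}') \geq s$, and consequently $\sdepth S/I \geq \rho(\mathcal{P}') \geq s = \sdepth S/I_j$.

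I do not foresee a serious obstacle here: the only point requiring any care is the identification of $P^g_{S/I_j}$ as the truncation of $P^g_{S/I}$ at $\rho \leq j$ (together with the compatibility of the shared vector $g$), and the general inequality $\sdepth \leq \dim$ ensures that the freshly added singletons automatically clear the bar set by $s$ so that no information is lost in the extension.
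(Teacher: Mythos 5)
Your proof is correct and follows essentially the same route as the paper: identify $P^g_{S/I_j}$ with the truncation $\{b\in P^g_{S/I}:\rho(b)\leq j\}$, extend an optimal partition of it by singleton intervals $[b,b]$ for the elements with $\rho(b)>j$, and use $\sdepth S/I_j\leq j$ to see the new tops do not lower the $\rho$-value. You merely spell out two details the paper leaves implicit (the proof of the poset identification and the bound $s\leq j$), which is fine.
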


\begin{proof}
Observe that $P^g_{S/I_j}=\{a\in P^g_{S/I}\:\; \rho(a)\leq j\}$. Let $t$ be the Stanley depth of $S/I_j$.
Then there exists a partition $\mathcal{P}\:\; P^g_{S/I_j}=\Union_{i=1}^r[c_i,d_i]$ with $\rho(\mathcal{P})=t$.
We complete the partition of $\mathcal{P}$ to a partition $\mathcal{P}'$ of $S/I$ by adding the intervals
$[a,a]$ with $a\in P^g_{S/I}\setminus P^g_{S/I_j}$. Since $\rho(a)>j$ for all $a\in P^g_{S/I}\setminus P^g_{S/I_j}$
it follows that $\rho(\mathcal{P}')=t$. Hence $\sdepth S/I\geq t$, as desired.
\end{proof}

We call an algebra of the form $S/I$ a monomial factor algebra if $I\subset S$ is a monomial ideal. Stanley's conjecture for a monomial factor algebra $S/I$ says that $\depth S/I\leq \sdepth S/I$.

\begin{Corollary}
\label{reduction}
Suppose Stanley's conjecture holds  for all Cohen--Macaulay monomial factor algebras  of dimension $t$. Then the conjecture holds for all monomial factor algebras of depth $t$. In particular, Stanley's conjecture holds for all monomial factor algebras  if and only
it holds for all Cohen--Macaulay monomial factor algebras.
\end{Corollary}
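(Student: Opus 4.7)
The plan is to combine Corollary~\ref{hibi} with Proposition~\ref{main}. Let $S/I$ be an arbitrary monomial factor algebra and set $t = \depth S/I$. By Corollary~\ref{hibi}, the skeleton $S/I_t$ is Cohen--Macaulay, and its dimension equals $t$ (since $\dim S/I_j = j$ by construction). Thus, associated to every monomial factor algebra of depth $t$, there is a canonical Cohen--Macaulay skeleton of dimension $t$ sitting above it in the sense of Proposition~\ref{main}.

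Assuming Stanley's conjecture holds for all Cohen--Macaulay monomial factor algebras of dimension $t$, we may apply it to $S/I_t$ to obtain
\[
\sdepth S/I_t \;\geq\; \depth S/I_t \;=\; t.
\]
Proposition~\ref{main} then gives $\sdepth S/I \geq \sdepth S/I_t \geq t = \depth S/I$, which is Stanley's conjecture for $S/I$. This proves the first assertion.

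For the ``in particular'' part, one direction is trivial: if Stanley's conjecture holds for all monomial factor algebras, it holds for the Cohen--Macaulay ones. Conversely, if it holds for all Cohen--Macaulay monomial factor algebras, then for each $t$ it holds in particular for the Cohen--Macaulay ones of dimension $t$, so by the first part it holds for every monomial factor algebra of depth $t$, and since every monomial factor algebra has some finite depth, it holds for all of them.

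There is no real obstacle here: the work has already been done in Theorem~\ref{crucial}, Corollary~\ref{hibi}, and Proposition~\ref{main}. The only subtlety worth flagging explicitly is that the skeleton ideals $I_j$ depend on the auxiliary vector $g$, but the argument above works for any admissible choice of $g$, since both the Cohen--Macaulayness of $S/I_t$ (from Corollary~\ref{hibi}) and the inequality $\sdepth S/I \geq \sdepth S/I_t$ (from Proposition~\ref{main}) hold for that same $g$.
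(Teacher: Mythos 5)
Your proof is correct and follows essentially the same route as the paper: apply Corollary~\ref{hibi} to get that $S/I_t$ is Cohen--Macaulay of dimension $t$, use the hypothesis to conclude $\sdepth S/I_t \geq t$, and then invoke Proposition~\ref{main}. The extra remarks on the dependence on $g$ and the ``in particular'' statement are fine and consistent with the paper's (more terse) argument.
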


\begin{proof}
Let $S/I$ be a monomial factor algebra with $t= \depth S/I$. Then $S/I_t$ is Cohen--Macaulay of dimension $t$, see Corollary~\ref{hibi}.
Our assumption implies that $\sdepth S/I_t=t$. Thus the assertion follows from Proposition~\ref{main}.
\end{proof}

As a concrete application we have

\begin{Corollary}
\label{concrete}
Let $S/I$ be a monomial factor algebra with  $\depth S/I \leq 1$. Then $S/I$ satisfies Stanley's conjecture.
\end{Corollary}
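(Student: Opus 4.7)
The plan is to combine the reduction of Corollary~\ref{reduction} with a direct verification in the remaining small-dimensional Cohen--Macaulay case. If $\depth S/I = 0$, then Stanley's conjecture $\sdepth S/I \geq 0 = \depth S/I$ holds trivially, since every Stanley decomposition of $S/I$ has Stanley depth at least $0$ by definition. If instead $\depth S/I = 1$, then Corollary~\ref{reduction} reduces the assertion to the single statement that every Cohen--Macaulay monomial factor algebra $S/I$ with $\dim S/I = 1$ satisfies $\sdepth S/I \geq 1$.

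To handle this remaining case, I would use the characterization of $\sdepth$ via partitions of the characteristic poset recalled at the beginning of Section~2, which requires exhibiting a partition $\mathcal{P}\: P^g_{S/I} = \Union_{i=1}^r [c_i, d_i]$ with $\rho(d_i) \geq 1$ for every $i$. The Cohen--Macaulay hypothesis together with $\dim S/I = 1$ forces every associated prime of $I$ to have the form $(x_j\:\; j \neq k)$ for some index $k$, and in particular every $b \in P^g_{S/I}$ with $\rho(b) = 0$ lies below some element of $\rho$-value $1$ (otherwise one would detect an embedded $0$-dimensional component, contradicting Cohen--Macaulayness). A greedy pairing then processes the elements of $\rho$-value $0$ and absorbs each one into an interval whose top endpoint is a suitable element of $\rho$-value $1$ lying above it, while elements already of $\rho$-value $1$ serve either as these top endpoints or as singleton intervals.

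The main obstacle is to arrange the pairing so that the chosen intervals are pairwise disjoint and jointly exhaust $P^g_{S/I}$; this is the combinatorial heart of the matter. It amounts to Apel's theorem that Stanley's conjecture holds for all monomial factor algebras of dimension at most $1$, so Corollary~\ref{concrete} can be viewed as a conceptually cleaner statement obtained by combining that result with the reduction of Corollary~\ref{reduction}. In particular, no new combinatorial work beyond the $1$-dimensional Cohen--Macaulay case is required, because Corollary~\ref{reduction} channels every depth-$1$ instance through a $1$-dimensional Cohen--Macaulay skeleton ideal $I_1$.
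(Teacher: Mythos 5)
Your proposal is correct and ultimately follows the paper's own route: reduce via Corollary~\ref{reduction} to the Cohen--Macaulay case of dimension at most $1$, dispose of dimension $0$ trivially, and invoke Apel's result \cite[Corollary 3]{A} for dimension $1$. The sketched ``greedy pairing'' of the characteristic poset is not actually completed, but since you explicitly fall back on Apel's theorem for that step, the argument matches the paper's proof in substance.
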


\begin{proof}
According to Corollary~\ref{reduction} it suffices to show that
any Cohen-Macaulay $K$-algebra $S/I$ of dimension $\leq 1$
satisfies Stanley's conjecture. This is trivially the case if
$\dim S/I=0$, and has been shown if $\dim S/I=1$ in
\cite[Corollary 3]{A}.
\end{proof}

We now prove a statement which in a certain sense is dual to that of Corollary~\ref{reduction}.
Let $I\subset J$ be monomial ideals and $\mathcal D=\Dirsum_{i=1}^r x^{c_i}K[Z_i]$ a Stanley
decomposition of $J/I$. The number $\max\{|c_i|\:\; i=1,\ldots,r\}$  is called the
$h$-regularity of $\mathcal D$,  denoted by $\hreg(\mathcal D)$.

We
set
\[
\hreg(J/I)=\min\{\hreg( \mathcal{D})\:\;{\mathcal D}\; \text{is a
Stanley decomposition of $J/I$}\}
\]
and call this number the $h$-regularity of $J/I$. In \cite{So},
the second author conjectured that $\hreg(J/I)\leq\reg(J/I)$.

Let $g\in \NN^n$ with $g\geq a$ for all minimal monomial
generators $x^a$ of $I$ and $J$. For a partition $\mathcal{P}\:\;
P^g_{J/I}=\Union_{j=1}^r[c_i,d_i]$,  we set
\[
\sigma(\mathcal{P})=\max\{\sigma_i(\mathcal{P})\:\; i=1,\ldots,r\},
\]
where
\[
\sigma_i(\mathcal{P})=\max\{|c|\:\; c\in [c_i,d_i] \text{ and $c(j)=c_i(j)$ for all $j$ with $x_j\in Z_{d_i}$}\}.
\]

\begin{Proposition}
\label{hreg}
$\hreg(J/I)=\min\{\sigma(\mathcal{P})\:\; {\mathcal P}\; \text{is a
partition  of $P^g_{J/I}$}\}.$
\end{Proposition}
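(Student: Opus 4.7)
The plan is to set up a correspondence between partitions of $P^g_{J/I}$ and Stanley decompositions of $J/I$, analogous to the one used in the proof of \cite[Theorem 2.1]{HVZ} for $\sdepth$, except that one tracks the maximum degree of the shift vectors rather than the minimum dimension of the Stanley spaces.

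For the inequality $\hreg(J/I)\leq\min_{\mathcal{P}}\sigma(\mathcal{P})$, I would start with a partition $\mathcal{P}\colon P^g_{J/I}=\bigcup_{i=1}^r[c_i,d_i]$ and construct an explicit Stanley decomposition of $J/I$. The monomials of $J\setminus I$ correspond bijectively, via $a\mapsto (\min(a(j),g(j)))_j$, to pairs $(b,\mu)$ with $b\in P^g_{J/I}$ and $\mu$ a monomial in $K[Z_b]$. Grouping the $b$'s by the interval they lie in and reorganising coordinatewise, one checks that the subset of monomials whose associated $b$ lies in $[c_i,d_i]$ decomposes as
\[
\Dirsum_{e\in E_i}x^{e}K[Z_{d_i}],\qquad E_i=\{e\in[c_i,d_i]:e(j)=c_i(j)\text{ for all }x_j\in Z_{d_i}\}.
\]
Summing over $i$ yields a Stanley decomposition of $J/I$ whose $h$-regularity is $\max_i\max_{e\in E_i}|e|=\sigma(\mathcal{P})$.

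For the reverse inequality, let $\mathcal{D}\colon J/I=\Dirsum_{k=1}^s x^{e_k}K[Z_k]$ attain $\hreg(J/I)$, and set $K_0=\{k:e_k\leq g\}$. For each $b\in P^g_{J/I}$, the unique Stanley space containing $x^b$ is indexed by some $k\in K_0$: writing $b=e_k+m$ with $m\geq 0$ supported on $Z_k$ forces $e_k(j)=b(j)\leq g(j)$ for $x_j\notin Z_k$ and $e_k(j)\leq b(j)\leq g(j)$ for $x_j\in Z_k$. For each $k\in K_0$ define $d_k(j)=g(j)$ if $x_j\in Z_k$ and $d_k(j)=e_k(j)$ otherwise. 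The intervals $\{[e_k,d_k]\}_{k\in K_0}$ then form a partition $\mathcal{P}$ of $P^g_{J/I}$: containment $b\in[e_{k(b)},d_{k(b)}]$ is immediate, and disjointness follows from the uniqueness of the Stanley space containing $x^b$. A routine check shows $Z_{d_k}\supseteq Z_k$ and that requiring $e\in[e_k,d_k]$ to satisfy $e(j)=e_k(j)$ for every $x_j\in Z_{d_k}$ forces $e=e_k$; hence $\sigma_k(\mathcal{P})=|e_k|$, and $\sigma(\mathcal{P})=\max_{k\in K_0}|e_k|\leq\hreg(\mathcal{D})=\hreg(J/I)$.

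The trickiest step I expect is the assembly identity
\[
\Dirsum_{b\in[c_i,d_i]}x^bK[Z_b]=\Dirsum_{e\in E_i}x^eK[Z_{d_i}]
\]
used in the first direction: verifying it requires a coordinatewise case analysis distinguishing $x_j\in Z_{d_i}$ from $x_j\notin Z_{d_i}$, and, within the former, the subcases $b(j)=g(j)$ versus $b(j)<g(j)$. Once this identity is established, the two inequalities combine to give the claimed equality.
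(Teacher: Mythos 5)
Your proposal is correct and follows essentially the same route as the paper: both directions rest on the correspondence between partitions of $P^g_{J/I}$ and Stanley decompositions of $J/I$ from \cite[Theorem 2.1]{HVZ}, together with the observations that the decomposition induced by a partition $\mathcal{P}$ has $h$-regularity $\sigma(\mathcal{P})$ and that the partition induced by an optimal decomposition has $\sigma$ at most $\hreg(J/I)$. The only difference is that you re-derive the two constructions (including the assembly identity and the verification that the intervals $[e_k,d_k]$, $k\in K_0$, partition $P^g_{J/I}$) explicitly, where the paper simply cites \cite[Theorem 2.1(a),(b)]{HVZ}; your verifications are correct.
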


\begin{proof}
To each partition $\mathcal{P}\:\; P^g_{J/I}=\Union_{j=1}^r[c_i,d_i]$ belongs a Stanley decomposition $\mathcal{D(P)}$, as described in  \cite[Theorem 2.1(a)]{HVZ}. The assignment is such that $\hreg(\mathcal{D(P)})= \sigma(\mathcal{P})$. This shows that $\hreg(J/I)\leq \min\{\sigma(\mathcal{P})\:\; {\mathcal P}\; \text{is a
partition  of $P_{J/I}^g$}\}.$

In order to prove that equality holds, we need to find a partition $\mathcal{P}$ with $\hreg(J/I)=\sigma({\mathcal{P}})$. Let $\mathcal{D}\:\; J/I=\Dirsum_{i=1}^rx^{c_i}K[Z_i]$ be a Stanley decomposition of $J/I$ with $\hreg(\mathcal{D})=\hreg(J/I)$.   In \cite[Theorem 2.1(b)]{HVZ} it is shown  $\mathcal{P}\:\; P_{J/I}^g=\Union_{i, c_i\leq g}[c_i,d_i]$ is a partition of $P_{J/I}^g$, where  $d_i(j)=c_i(j)$ if $x_j\not\in Z_i$, and $d_i(j)=g(j)$ otherwise. Thus we see that $\hreg(J/I)=\max\{|c_i|\:\; i=1,\ldots,r\}= \sigma(\mathcal{P})$.
\end{proof}

Observe that the preceding proposition implies in particular that $\hreg(J/I)$ can be computed in a finite number of steps.

\medskip
For a graded ideal $I$ we denote by $I_{\geq j}$ the $j$th truncation of $I$, that is, the ideal generated by all homogeneous elements $f\in I$ with $\deg f \geq j$.

\begin{Proposition}
\label{ali}
For all $j\geq 0$ we have $\hreg(I)\leq \hreg(I_{\geq j})$.
\end{Proposition}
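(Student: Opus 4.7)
The plan is to build a Stanley decomposition of $I$ directly from any Stanley decomposition of $I_{\geq j}$ by appending the (finitely many) monomials of $I$ of total degree $<j$ as trivial $0$-dimensional Stanley spaces, and then to observe that this cannot raise the $h$-regularity.

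First I would decompose $I$ as a $\ZZ^n$-graded $K$-vector space as
\[
I \;=\; I_{\geq j}\;\oplus\; V,
\]
where $V$ is the $K$-span of the (finitely many) monomials $x^a\in I$ with $|a|<j$. This is immediate: $I_{\geq j}$ is generated in standard degrees $\geq j$, so its elements are exactly those elements of $I$ of total degree $\geq j$, while $V$ collects the remaining monomials of $I$. Now pick a Stanley decomposition
\[
\mathcal{D}\colon\; I_{\geq j}=\bigoplus_{i=1}^{r}x^{c_i}K[Z_i]
\]
realising $\hreg(\mathcal{D})=\hreg(I_{\geq j})$, and form
\[
\mathcal{D}'\colon\; I\;=\;\Bigl(\bigoplus_{i=1}^{r}x^{c_i}K[Z_i]\Bigr)\;\oplus\;\Bigl(\bigoplus_{x^a\in V}x^a K\Bigr),
\]
where each $x^aK=x^aK[\emptyset]$ is regarded as a $0$-dimensional Stanley space. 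By the previous step this is a valid Stanley decomposition of $I$.

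Finally I would compute $\hreg(\mathcal{D}')$. Since each generator $x^{c_i}$ lies in $I_{\geq j}$ we have $|c_i|\geq j$, so $\hreg(\mathcal{D})\geq j>|a|$ for every $x^a\in V$. Hence
\[
\hreg(\mathcal{D}')=\max\Bigl(\hreg(\mathcal{D}),\ \max_{x^a\in V}|a|\Bigr)=\hreg(\mathcal{D})=\hreg(I_{\geq j}),
\]
and therefore $\hreg(I)\leq\hreg(\mathcal{D}')=\hreg(I_{\geq j})$. The argument is essentially bookkeeping and I do not anticipate a serious obstacle; the only point worth emphasising is the trivial but crucial lower bound $|c_i|\geq j$ on the exponent of each base monomial of $\mathcal{D}$, which is precisely what ensures that adjoining the low-degree monomials cannot increase the $h$-regularity.
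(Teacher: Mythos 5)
Your proof is correct, but it takes a different (more elementary) route than the paper. You work directly with Stanley decompositions: you split $I=I_{\geq j}\oplus V$ as $\ZZ^n$-graded $K$-spaces, where $V$ is the finite-dimensional span of the monomials of $I$ of total degree $<j$ (this uses the easy fact that for a monomial ideal the truncation $I_{\geq j}$ is exactly the span of the monomials of $I$ of degree $\geq j$), and then adjoin each such monomial as a $0$-dimensional Stanley space $x^aK[\emptyset]$ to an optimal decomposition of $I_{\geq j}$. The paper instead stays inside the characteristic-poset framework: it takes a partition $\mathcal{P}$ of $P^g_{I_{\geq j}}$ with $\sigma(\mathcal{P})=\hreg(I_{\geq j})$ and completes it to a partition $\mathcal{P}'$ of $P^g_I$ by adding the singleton intervals $[a,a]$ for $a\in P^g_I\setminus P^g_{I_{\geq j}}$, then invokes Proposition~\ref{hreg} to conclude $\hreg(I)\leq\sigma(\mathcal{P}')$; note that under the correspondence of \cite{HVZ} a singleton $[a,a]$ yields the Stanley space $x^aK[Z_a]$ rather than $x^aK$, so the two constructions are parallel but not identical. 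Both arguments hinge on the same key point, which you state explicitly and the paper leaves implicit: every base monomial $x^{c_i}$ of a Stanley decomposition of $I_{\geq j}$ lies in $I_{\geq j}$, hence $|c_i|\geq j>|a|$ for the adjoined low-degree monomials, so the new pieces cannot raise the $h$-regularity. What your approach buys is independence from Proposition~\ref{hreg} and from the choice of a common bound $g$ for $I$ and $I_{\geq j}$; what the paper's approach buys is uniformity, since the same partition-completion trick is also the engine of Proposition~\ref{main}, and it shows in passing that the optimum is realized by partitions of the finite poset.
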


\begin{proof}
We choose $g\in \NN^n$ such that $g\geq a$ for all generators $x^a$ of $I$ and $I_{\geq j}$. Let $\mathcal{P}$ be a partition of $P^g_{I_{\geq j}}$ with $\sigma(\mathcal{P})=\hreg(I_{\geq  j})$.  We complete the partition $\mathcal{P}$ to a partition $\mathcal{P}'$ of $P^g_{I}$ by adding the intervals $[a,a]$ with $a\in P^g_{I}\setminus P^g_{I_{\geq j}}$. Then Proposition~\ref{hreg} implies that $\hreg(I)\leq \sigma(\mathcal{P'})=\sigma(\mathcal{P})=\hreg(I_{\geq j})$.
\end{proof}

\begin{Corollary}
\label{new}
Suppose $\hreg(I)\leq\reg(I)$ for all ideals with linear resolution. Then this inequality is valid for all graded ideals.
\end{Corollary}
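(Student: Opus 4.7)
My plan is to combine Proposition~\ref{ali} with the standard fact that sufficiently high truncations of a graded ideal have a linear resolution. Concretely, fix a (monomial) graded ideal $I$ and set $j=\reg(I)$. By a classical result (essentially going back to Eisenbud--Goto, and recorded for instance in the book of Herzog--Hibi), the truncated ideal $I_{\geq j}$ is generated in degree $j$ and has a linear resolution; moreover $\reg(I_{\geq j})=j=\reg(I)$.

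Granting this, the proof is a two-line chain. First, Proposition~\ref{ali} gives
\[
\hreg(I)\leq \hreg(I_{\geq j}).
\]
Second, since $I_{\geq j}$ has a linear resolution, the standing assumption of the corollary applies to $I_{\geq j}$ and yields
\[
\hreg(I_{\geq j})\leq \reg(I_{\geq j})=\reg(I).
\]
Combining the two inequalities produces $\hreg(I)\leq \reg(I)$, as required.

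The only point that is not already established in the paper is the claim that $I_{\geq \reg(I)}$ has a linear resolution with the same regularity as $I$. This is the one step that requires an external citation rather than an internal argument, and thus is the main (and only real) obstacle to writing the proof. Everything else is a direct application of the results proved in the previous paragraphs: Proposition~\ref{ali} handles the comparison between $I$ and its truncation, and the hypothesis of the corollary handles the truncation itself.
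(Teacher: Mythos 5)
Your proposal is correct and follows essentially the same route as the paper: both truncate $I$ at (the minimal) degree $j$ where $I_{\geq j}$ acquires a linear resolution, invoke the Eisenbud--Goto result to identify $\reg(I_{\geq j})=\reg(I)$, and then chain Proposition~\ref{ali} with the hypothesis applied to $I_{\geq j}$. The only cosmetic difference is that you fix $j=\reg(I)$ outright while the paper quotes Eisenbud--Goto in the form $\reg(I)=\min\{j\:\; I_{\geq j}\ \text{has a linear resolution}\}$; the substance is identical.
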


\begin{proof}
By a result of Eisenbud and Goto \cite{EG} (see also \cite[Theorem 4.3.1]{BH}) one has $$\reg(I)=\min\{j\:\; \text{$I_{\geq j}$ has a linear resolution}\}.$$
 We choose a  $j$ such that $I_{\geq j}$ has a linear resolution. Then  our assumption and Proposition~\ref{ali} imply that
$\hreg(I)\leq \hreg(I_{\geq j})\leq \reg(I_{\geq j})=\reg(I)$.
\end{proof}

For a monomial ideal $I$ with linear resolution, minimally generated  by $x^{a_1},\ldots, x^{a_r}$, Soleyman-Jahan's conjecture reads as follows: there exist $b_1,\ldots,b_r\in P^g_{I}$ such that $P^g_{I}=\Union_{i=1}^r[a_i,b_i]$. In other words, $P^g_{I}$ can be partitioned by intervals whose lower ends correspond to the generators of $I$.


\begin{thebibliography}{99}


\bibitem{A} J.\ Apel, \ On a conjecture of R.\ P.\ Stanley; Part II - Quotients Modulo Monomial Ideals, J.\ of Alg.\ Comb.\ {\bf 17}, (2003), 57--74.

\bibitem{BH} W.\ Bruns, J.\ Herzog, \ {\sl Cohen Macaulay rings}, Revised Edition, Cambridge, 1996.

\bibitem{EG} D.\ Eisenbud, S.\ Goto, Linear free resolutions and minimal multiplicity, J.\ Alg. {\bf 88} (1983), 202--224.


\bibitem{HVZ} J.\ Herzog, M.\ Vladoiu, Xinxian Zheng, \ How to compute the Stanley depth of a monomial ideal, to appear in J.\ Alg.


\bibitem{Hi} T.\ Hibi, Quotient Algebras of Stanley--Reisner Rings and Local Cohomology, J.\ Alg.\ {bf 140}, (1991), 336--343.

\bibitem{Sm} D.\ Smith, On the Cohen-Macaulay property in commutative algebra and simplicial topology, Pacific J.\  Math.\ {\bf 141}, (1990), 165--196.




\bibitem{So} A.\ Soleyman-Jahan, Stanley decompositions of squarefree modules and Alexander duality,  arXiv:math.AC/0709.4145, preprint 2007.

\bibitem{St} R.\ P.\ Stanley, \ Linear Diophantine equations and local cohomology, Invent.\ Math.\ {\bf 68}, (1982), 175--193.


\end{thebibliography}
\end{document}